\let\ds=\displaystyle 
  \def\Q{\mathbb{Q}} 
  \def\Z{\mathbb{Z}} 
  \def\F{\mathbb{F}} 
\def\ab{{\rm ab}} 
\def\No{{\rm N}} 
\def\plus{\ds\mathop{\raise 2.0pt \hbox{$\bigoplus $}}\limits} 
\def\mult{\ds\mathop{\raise 2.0pt \hbox{$\bigotimes$}}\limits} 
\def\prd{ \ds\mathop{\raise 2.0pt \hbox{$  \prod   $}}\limits} 
\def\Cap{ \ds\mathop{\raise 2.0pt \hbox{$\bigcap   $}}\limits} 
\def\sm{  \ds\mathop{\raise 2.0pt \hbox{$  \sum    $}}\limits}
\def\Cup{\displaystyle \mathop{\raise 2.0pt \hbox{$\bigcup$}}\limits}
\def\ov{\overline} 
\newtheorem{theo}{Theorem} 
\newtheorem{prop}{Proposition} 
\newtheorem{remark}{Remark} 
\begin{document} 

\title{\hspace{-0.8cm} Stickelberger's congruences \hspace{-0.8cm} \\ 
\hspace{-0.8cm} for absolute norms of
\hspace{-0.8cm} \\ \hspace{-0.8cm}  relative discriminants} 
 
\author[Georges {\sc Gras}]{{\sc Georges} GRAS} 

\address{Georges {\sc Gras}\\ 
Villa la Gardette, chemin Ch\^ateau Gagni\`ere,\\ 
F-38520 Le Bourg d'Oisans} 
\email{g.mn.gras@wanadoo.fr} 
\urladdr{http://monsite.orange.fr/maths.g.mn.gras/} 


\date{27 mars 2010, r\'evis\'e le 8 avril 2010} 

\keywords{Number fields, Discriminants, 
Stickelberger congruences, Class field theory, Kummer theory} 

\subjclass{11R29, 11R37} 

\maketitle 

\begin{resume} 
Nous g\'en\'eralisons un r\'esultat de J. Martinet 
sur les congruences de Stickelberger pour les normes absolues 
des discriminants relatifs des corps de nombres, 
en utilisant des arguments classiques du corps de classes. 
\end{resume} 

\begin{abstr} 
We give an improvement of a result of J. Martinet on 
Stickelberger${}'$s congruences for the absolute norms 
of relative discriminants of number fields, 
by using classical arguments of class field theory. 
\end{abstr} 

\section{Introduction} 

Let $L/K$ be a finite extension of number fields. 
Denote by ${\mathfrak d}_{L/K}$ the relative 
discriminant of $L/K$ and by $c$ the number of complex infinite places 
of $L$ which lie above a real place of $K$. 

\smallskip 
The absolute norm of an ideal ${\mathfrak a}$ of $K$ is a positive 
rational
denoted $\ov\No_{K/\Q}\, ({\mathfrak a})$; it is the positive generator of 
$\No_{K/\Q}\, ({\mathfrak a})$, where $\No_{K/\Q}$ 
is the arithmetic norm. If $\alpha \in K^\times$, we define the 
{\em absolute norm of $\alpha$} (or $(\alpha)$) by 
$\ov\No_{K/\Q}\, (\alpha) := \vert\,\No_{K/\Q}\, (\alpha)\,\vert$ 
(this has some importance in class field theory). 

\smallskip 
In [Ma], J. Martinet proved the following result 
about $\ov\No_{K/\Q}\, ({\mathfrak d}_{L/K})$: 

\begin{prop} If $K$ contains a primitive $2^{m+1}$th root of unity 
($m \geq 0$) and if $L/K$ is not ramified at $2$, 
then $(-1)^c\, \ov\No_{K/\Q}\, ({\mathfrak d}_{L/K}) \equiv 1 
\bmod (4\cdot 2^{m})$. 
\end{prop}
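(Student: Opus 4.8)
The plan is to convert the statement into a purely multiplicative congruence between the absolute discriminants of $L$ and $K$, and then to extract it from the reciprocity law for the cyclotomic extension $K(\zeta_{2^{m+2}})/K$, whose relative degree is forced to be at most $2$ by the hypothesis $\zeta_{2^{m+1}}\in K$.

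First I would dispose of the sign. Counting the places of $L$ above each infinite place of $K$ gives $r_2(L) = c + [L:K]\, r_2(K)$: a real place of $K$ that becomes complex contributes $[L:K]/2$ complex places of $L$, a real place that stays real contributes none, and a complex place of $K$ contributes $[L:K]$ complex places. Combining this with the transitivity of the discriminant in the form $\vert d_L\vert = \vert d_K\vert^{[L:K]}\,\ov\No_{K/\Q}(\mathfrak d_{L/K})$ and with $\mathrm{sign}(d_F)=(-1)^{r_2(F)}$, one obtains the clean identity $d_L/d_K^{[L:K]} = (-1)^c\,\ov\No_{K/\Q}(\mathfrak d_{L/K})$. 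Thus the Proposition is equivalent to the congruence $d_L \equiv d_K^{[L:K]} \pmod{2^{m+2}}$ at the level of $2$-adic unit parts, the hypothesis that $L/K$ is unramified at $2$ ensuring that the two sides have equal $2$-adic valuation so that the ratio is a $2$-adic unit (in particular an odd integer).

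Next I would reduce to a Galois situation. Since $\delta(L/K):=d_L/d_K^{[L:K]}$ is multiplicative in towers, $\delta(L/K)=\delta(M/K)^{[L:M]}\,\delta(L/M)$ for $K\subseteq M\subseteq L$, and since every intermediate field still contains $\zeta_{2^{m+1}}$ and is unramified at $2$, it suffices to treat the abelian (indeed cyclic) case, the general case being absorbed by passing to the Galois closure and applying the conductor--discriminant formula to induced characters. For abelian $L/K$ that formula yields $(-1)^c\,\ov\No_{K/\Q}(\mathfrak d_{L/K}) = \prod_{\chi} (-1)^{r_\infty(\chi)}\,\ov\No_{K/\Q}(\mathfrak f(\chi))$, the product running over the characters of $\mathrm{Gal}(L/K)$, where $\mathfrak f(\chi)$ is the finite part of the conductor of $\chi$ and $r_\infty(\chi)$ counts the real places of $K$ in its infinite part; a short check shows that these archimedean signs reassemble exactly into the factor $(-1)^c$.

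The heart of the matter, and the step I expect to be the genuine obstacle, is to prove that each factor $(-1)^{r_\infty(\chi)}\,\ov\No_{K/\Q}(\mathfrak f(\chi))$ is $\equiv 1 \pmod{2^{m+2}}$. As $L/K$ is unramified at $2$, the conductor $\mathfrak f(\chi)$ is prime to $2$, so its absolute norm is an odd integer whose class modulo $2^{m+2}$ is read off by the Artin symbol in $\mathrm{Gal}(K(\zeta_{2^{m+2}})/K)$; here $\zeta_{2^{m+1}}\in K$ is decisive, since it makes this group cyclic of order at most $2$ and identifies the condition ``$\equiv 1 \bmod 2^{m+2}$'' with the triviality of the $2^{m+1}$-st power residue symbol attached to $\mathfrak f(\chi)$. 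The plan is then to invoke Hasse's reciprocity, i.e.\ the product formula for the norm-residue symbol over all places of $K$: the finite symbols assemble into the power-residue symbol of $\mathfrak f(\chi)$, while the ramified archimedean places contribute precisely the signs $(-1)^{r_\infty(\chi)}$, so that the vanishing of the global product forces the value $1$. The delicate points to control are the contribution of wildly ramified odd primes, where $\mathfrak f(\chi)$ may carry a higher exponent, and the exact matching of the archimedean norm-residue symbols with $(-1)^{r_\infty(\chi)}$; once these are pinned down, reassembling over the characters $\chi$ yields the congruence modulo $2^{m+2}$ and hence the Proposition.
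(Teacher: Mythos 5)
Your opening reduction is correct: the identity $d_L/d_K^{[L:K]} = (-1)^c\,\ov\No_{K/\Q}\,({\mathfrak d}_{L/K})$, and (for abelian $L/K$) the bookkeeping $(-1)^c=\prod_\chi(-1)^{r_\infty(\chi)}$, are both fine. The fatal problem is the step you yourself call the heart of the matter: the claim that \emph{each} factor $(-1)^{r_\infty(\chi)}\,\ov\No_{K/\Q}\,({\mathfrak f}(\chi))$ is $\equiv 1 \bmod (2^{m+2})$ is false. Take $K=\Q$ (so $m=0$), $L$ the cyclic cubic field of conductor $7$, and $\chi$ a cubic character mod $7$: since $-1\equiv 3^3 \bmod 7$, the character $\chi$ is even, so $r_\infty(\chi)=0$, while $\ov\No_{\Q/\Q}\,({\mathfrak f}(\chi))=7\equiv 3\bmod 4$. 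Consistently, your own mechanism detects this: the Artin symbol of $(7)$ in $\Q(\zeta_4)/\Q$ is the nontrivial element, so no amount of care with the product formula can establish the character-wise statement — it is proving something untrue. The congruence only holds for the \emph{product} over all $\chi$: non-real characters pair with their conjugates $\ov\chi$, which have the same conductor and the same infinity type, so such pairs contribute squares, and $\ov\No_{K/\Q}\,({\mathfrak a})^2\equiv 1\bmod (2^{m+2})$ for \emph{any} odd ideal ${\mathfrak a}$, because $\mu_{2^{m+1}}\subset K$ forces every odd prime ${\mathfrak p}$ of $K$ to satisfy $\No{\mathfrak p}\equiv 1 \bmod (2^{m+1})$ (the residue field contains $\mu_{2^{m+1}}$). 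Only the real (i.e.\ quadratic) characters are amenable to your reciprocity argument. Once you restructure around this pairing, you have essentially rediscovered why all Stickelberger-type proofs reduce to a single quadratic extension — which is exactly what the paper does: it writes ${\mathfrak d}_{L/K}=(\alpha)\,{\mathfrak c}^2$ with $K(\sqrt{\alpha\,})/K$ unramified at $2$ (its Proposition 3), disposes of $\ov\No_{K/\Q}\,({\mathfrak c})^2$ by the global Artin-group computation of its Section 3, and handles $\No_{K/\Q}\,(\alpha)$ place by place via local class field theory at $2$ (its Theorem 1).

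A second, independent gap is your reduction to the abelian case. Tower multiplicativity of $\delta(L/K)$ gives nothing when $L/K$ is non-Galois with no proper intermediate fields (e.g.\ a degree-$5$ extension whose Galois closure has group $S_5$, where the point stabilizer is a maximal subgroup), so all the weight falls on ``passing to the Galois closure and applying the conductor--discriminant formula to induced characters.'' But there ${\mathfrak d}_{L/K}$ is the Artin conductor of the induced representation ${\rm Ind}_H^G\,1$, whose irreducible constituents generally have degree $>1$; their Artin conductors are not reachable by class-field-theoretic reciprocity without a further Brauer-induction argument, which expresses them through discriminants of intermediate fields $E$ of the closure and norms $\No_{E/K}$ of conductors of abelian characters \emph{over} $E$, together with an induction on degree. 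None of this is sketched, and it is a substantial argument rather than a formality; the paper's quadratic-resolvent reduction handles arbitrary finite $L/K$ in one stroke and avoids non-abelian character theory entirely.
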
 

In [Pi], S. Pisolkar proved, in connection with the previous result: 

\begin{prop} Let $p \geq 2$ be any prime number. 
    Let $K_v$ be the completion of $K$ at a place 
    $v \,\vert \, p$ (or any finite extension of $\Q_p$); 
    we suppose that $K_v$ contains 
    a primitive $p^{h+1}$th root of unity, $h \geq 0$. 
    Let $K_v(\sqrt[p]{\alpha\,})$, $\alpha \in     K_v^\times$, 
    be an unramified Kummer extension of $K_v$. 
Then $\No_{K_v/\Q_p}\,( \alpha) \equiv 1 \bmod (p^{h+2})$. 
\end{prop}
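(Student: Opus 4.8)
The plan is to translate the statement into local class field theory over $K_v$ and to exploit a Kummer duality between the unramified extension $K_v(\sqrt[p]{\alpha\,})$ and the cyclotomic extension $K_v(\mu_{p^{h+2}})$. Write $k := K_v$, let $v_k$ be the normalized valuation, and recall $\mu_{p^{h+1}} \subset k$. Since $\No_{k/\Q_p}(\alpha)$ can be a $p$-adic unit — hence congruent to $1$ modulo $p^{h+2}$ — only when $v_k(\alpha)=0$, I first record that the content lies in the case where $\alpha$ is a unit (unramifiedness of $k(\sqrt[p]{\alpha\,})/k$ forces $p \mid v_k(\alpha)$, and $\No_{k/\Q_p}$ of a uniformizer is not a unit). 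So I take $\alpha \in \mathcal{O}_k^\times$.

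First I would reinterpret the target congruence through reciprocity. By functoriality of the local reciprocity maps along $\Q_p \subset k$, one has $\mathrm{rec}_{\Q_p}\!\big(\No_{k/\Q_p}(\alpha)\big) = \mathrm{rec}_k(\alpha)\big|_{\Q_p^{\ab}}$. Because $p$ is a norm from every $\Q_p(\mu_{p^n})$ and the cyclotomic reciprocity map identifies $\Z_p^\times$ with $\mathrm{Gal}\big(\Q_p(\mu_{p^\infty})/\Q_p\big)$, a $p$-adic unit $u$ satisfies $u \equiv 1 \bmod p^{h+2}$ exactly when $\mathrm{rec}_{\Q_p}(u)$ acts trivially on $\mu_{p^{h+2}}$. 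Hence, for $\alpha$ a unit, the desired congruence is equivalent to the assertion that $\mathrm{rec}_k(\alpha)$ fixes $\mu_{p^{h+2}}$, i.e. that $\alpha$ is a norm from $M := k(\mu_{p^{h+2}})$.

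Next comes the duality that makes the argument work. Since $\zeta_{p^{h+2}}^{\,p} = \zeta_{p^{h+1}} \in k$, we have $M = k\big(\sqrt[p]{\zeta_{p^{h+1}}}\big)$, a cyclic extension of degree dividing $p$ whose Kummer generator $\gamma := \zeta_{p^{h+1}}$ is a unit, so $v_k(\gamma)=0$. Using the $p$-th power Hilbert symbol $(\cdot,\cdot)_k$ on $k^\times/(k^\times)^p$, membership $\alpha \in \No_{M/k}(M^\times)$ is equivalent to $(\alpha,\gamma)_k = 1$. By skew-symmetry $(\alpha,\gamma)_k(\gamma,\alpha)_k = 1$, so it suffices to prove $(\gamma,\alpha)_k = 1$, i.e. that $\gamma$ is a norm from $L := k(\sqrt[p]{\alpha\,})$. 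But $L/k$ is unramified of degree dividing $p$, whence $\No_{L/k}(L^\times) = \{x \in k^\times : p \mid v_k(x)\}$ (units are norms, and $v_k\circ \No_{L/k} = [L:k]\,v_k$ on uniformizers). As $v_k(\gamma)=0$ is divisible by $p$, indeed $\gamma \in \No_{L/k}(L^\times)$, so $(\gamma,\alpha)_k = 1$; therefore $(\alpha,\gamma)_k = 1$, $\alpha \in \No_{M/k}(M^\times)$, and the congruence follows.

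The step I expect to be most delicate is the reciprocity translation: pinning down the normalization of the local reciprocity maps so that \emph{``acts trivially on $\mu_{p^{h+2}}$''} corresponds precisely to \emph{``$\equiv 1 \bmod p^{h+2}$''}, and making transparent why only the unit case carries content (for $v_k(\alpha)>0$ the norm is not a $p$-adic unit). The remaining ingredients — recognizing $\gamma=\zeta_{p^{h+1}}$ as a unit via $M=k(\sqrt[p]{\zeta_{p^{h+1}}})$, and computing the unramified norm group — are routine.
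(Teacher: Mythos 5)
Your proof is correct, and it takes a genuinely different route from the paper's. The paper treats this proposition as the special case $k_{(v)} = \Q_p(\mu_{p^{h+1}})$ (so $m=h$) of its Theorem~\ref{thlocal}, and proves that theorem by explicit computation: it splits off the unramified part $F_v := K_v \cap \Q_p(\mu_p)^{\rm nr}$, writes the generator of an unramified Kummer extension in the explicit form $\alpha' = x'{}^p\,(1+p\,(1-\zeta)\,y')$ with $v(y')\geq 0$ (the criterion of [Gr, I.6.3]), raises it to the power $[K_v:F_v]$ --- a multiple of $p^m$ by total ramification of $k_{(v)}/\Q_p(\mu_p)$ --- and pushes the resulting congruence down to $\Q_p$ using the explicitly listed norm groups of the cyclotomic subfields of $\Q_p(\mu_{p^\infty})$. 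You instead dualize via reciprocity: the desired congruence is equivalent to $\alpha \in \No_{M/K_v}(M^\times)$ for $M = K_v(\mu_{p^{h+2}}) = K_v(\sqrt[p]{\zeta_{p^{h+1}}})$, and the skew-symmetry $(\alpha,\gamma)\,(\gamma,\alpha)=1$ of the Hilbert symbol converts this into the statement that the unit $\gamma = \zeta_{p^{h+1}}$ is a norm from the unramified extension $K_v(\sqrt[p]{\alpha\,})$, which is the standard fact that units are norms in unramified extensions of local fields; your preliminary reduction to unit $\alpha$ matches the paper's parenthetical remark in Theorem~\ref{thlocal}. What each approach buys: yours is shorter, uniform in $p$ (including $p=2$), and needs no explicit unit calculations. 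The paper's heavier computation, however, is what allows the hypothesis to be weakened to ``$K_v$ contains $\mu_p$ and a subfield $k_{(v)}$ of $\Q_p(\mu_{p^\infty})$ of degree $p^m$ over $\Q_p(\mu_p)$'': for $p=2$ this includes the totally real field $\Q_2^{(m)}$, in which case $\mu_{2^{m+1}} \not\subset K_v$ and your duality collapses --- the Kummer generator of the next layer over $\Q_2^{(m)}$ (e.g.\ $\sqrt{2}$ over $\Q_2$) is no longer a root of a unit, so ``units are norms'' gives nothing; this is precisely the regime where Theorem~\ref{thlocal} must invoke its supplementary conditions (i)/(ii), such as even ramification index, to recover the stronger congruence. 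So your argument is a clean, valid proof of the stated proposition, while the paper's method is what makes its generalization possible.
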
 

\smallskip 
In this paper we give a synthetic proof of these results with some 
generalization of the hypothesis (especially for the case $p=2$); 
see Theorem~\ref{thmain}. 

\section{Prerequisites on discriminants} 
Classical proofs of Stickelberger's congruences make use of the fact 
that any odd discriminant ideal ${\mathfrak d}_{L/K}$ 
is canonically associated with the discriminant 
of a quadratic extension of $K$, unramified at~$2$. 
This essential reduction is summarized in the following proposition 
(see [Ma, \S\;3]). 

\begin{prop} Let $L/K$ be a finite extension of number fields 
and let $\alpha \,K^{\times 2}$, 
 in $K^\times/K^{\times 2}$, be the image of the discriminant $\alpha$ 
of a $K$-base of $L$. Then: 
 
\smallskip 
 (i) The class $\alpha \,K^{\times 2}$ does not depend on the 
 choice of the $K$-base.
 
\smallskip
 (ii) Let $K' := K(\sqrt{\alpha\,})$; then there exists an integral 
 ideal ${\mathfrak a}$ of $K$ such that 
 ${\mathfrak d}_{L/K} = {\mathfrak d}_{K'/K}\, {\mathfrak a}^2$. 
 
\smallskip 
 (iii) If $2$ is unramified in $L/K$ it is unramified in $K'/K$ 
 and we have ${\mathfrak d}_{K'/K} = (\alpha)\,{\mathfrak b}^2$, 
hence ${\mathfrak d}_{L/K} = (\alpha) \, {\mathfrak c}^2$, 
where ${\mathfrak b}$ and ${\mathfrak c}$ are ideals of $K$. 
\end{prop} 

We suppose in the sequel that ${\mathfrak d}_{L/K}$ is odd; 
thus we can choose, modulo $K^{\times 2}$, an odd $\alpha$, 
which implies that ${\mathfrak c}$ is odd. 
We then have to compute $\ov\No_{K/\Q} \,({\mathfrak c})^2$ 
and $\ov\No_{K/\Q}\, (\alpha)$. 

\section{Computation of $\ \ov\No_{K/\Q}\, ({\mathfrak c})^2$.} 
From class field theory over $\Q$ we get $\No_{K/\Q} \,({\mathfrak c}) \in 
A_K$, the Artin group of $K$ which is that of $K^\ab$, 
where $K^\ab$ is the maximal abelian subextension of $K$. 

So we see that to obtain nontrivial congruences modulo a power of~$2$ 
we must suppose that this Artin group is roughly a ray group mudulo 
a power of~$2$ in the following way. 

\medskip 
Let $\Q(\mu_{2^\infty})$ be the field generated by all roots of unity 
of order a power of 2. The best hypothesis is that $K$ does contain 
a subfield $k$ of $\Q(\mu_{2^\infty})$ of degree~$2^m$, $m\geq 0$. 

\smallskip 
For $m=0$ we get $k = \Q$ (which is also $\Q^{(0)}$ in the description 
below) and for any $m\geq 1$, the field $k$ is 
equal to one of the following three fields, for which we indicate 
its Artin group as a subgroup of 
$A_\Q := \{ u\,\Z,\ u\in \Q^\times, \ u\ {\rm odd}\,\}$ 
(see e.g. [Gr, II.5.5.2]): 

\smallskip 
$\ \ \bullet\ \ $ $k = \Q^{(m)}$ is the subfield, of degree $2^m$, 
of the cyclotomic $\Z_2$-extension of $\Q$; its Artin group is: 
$$A_{\Q^{(m)}} = \{ u\,\Z,\ u\in \Q^\times,\ u>0, \ u \equiv \pm 1 \bmod 
(4\cdot 2^m) \}\,; $$ 

\smallskip 
$\ \ \bullet\ \ $ $k = \Q'{}^{(m)}$, $m\geq 1$, is the subfield 
of $\Q(\mu_{4\cdot 2^m})$ 
of relative degree 2, distinct from $\Q(\mu_{4\cdot 2^{m-1}})$ 
and from $\Q^{(m)}$; its Artin group is 
$$A_{\Q'{}^{(m)}} = \{ u\,\Z,\ u\in \Q^\times,\ u>0, \ u 
\equiv 1\, \ {\rm or}\, -1+4\cdot 2^{m-1} \bmod (4\cdot 2^m) \}\,;$$ 

\smallskip 
$\ \ \bullet\ \ $ $k = \Q(\mu_{4\cdot 2^{m-1}})$, $m\geq 1$; 
its Artin group is 
$$A_{\Q(\mu_{4\cdot 2^{m-1}})} = \{ u\,\Z,\ u\in \Q^\times,\ u>0, \ u 
\equiv 1 \bmod (4\cdot 2^{m-1}) \}\,. $$ 

So this yields 
$$\No_{K/\Q}\, ({\mathfrak c})^2 \in \{ u\,\Z,\ u\in \Q^\times,\ u>0, 
\ u \equiv 1 \bmod (4\cdot 2^{m}) \}\,, $$ 
except if $k = \Q^{(m)}$, in which case 
$$\No_{K/\Q}\, ({\mathfrak c})^2 \in \{ u\,\Z,\ u\in \Q^\times,\ u>0, 
\ u \equiv 1 \bmod (4\cdot 2^{m+1})\}\,;$$ 
in other words, taking absolute norms: 
\begin{eqnarray*} 
\ov\No_{K/\Q}\, ( {\mathfrak c})^2 &\equiv& 1 \bmod (4\cdot 2^{m}), \ \ 
\hbox{if $k = \Q'{}^{(m)}$ or $\Q(\mu_{4\cdot 2^{m-1}})$, $m\geq 1$,}\\ 
\ov\No_{K/\Q}\, ( {\mathfrak c})^2 &\equiv& 1 \bmod (4\cdot 2^{m+1}),\ \ 
\hbox{ if $k = \Q^{(m)}$, $m\geq 0$.} 
\end{eqnarray*} 

\section{Computation of $\ \ov\No_{K/\Q}\, (\alpha)$.} 
The best way is to use local class field theory by computing 
$\No_{K/\Q}\, (\alpha) = \prd_{v\vert 2}\,\No_{K_v/\Q_2}\, (\alpha)$, 
where $K_v$ is the completion of $K$ at the place $v\,\vert\, 2$ 
of $K$ and $\No_{K_v/\Q_2}$ the local norm. 

\smallskip 
The result is given by the following generalization of the result of [Pi]. 

\medskip 
Let $p \geq 2$ be a prime number, let $K_v$ be the completion 
of the number field $K$ at $v\,\vert\,p$ (or any finite extension 
of $\Q_p$); we suppose that $K_v$ contains $\mu_p$ 
and a subfield $k_{(v)}$ of $\Q_p(\mu_{p^\infty})$ of degree $p^m$ 
over $\Q_p(\mu_p)$, $m \geq 0$ (if $p=2$, the context is 
that of the previous section for which the hypothesis are satisfied 
for all $v\,\vert\, 2$, 
with $k_{(v)} = k_v := \Q_2\,k$ and $[k_{(v)}:\Q_2]=[k:\Q]=2^m$, 
independently of $v\,\vert\, 2$, 
since $k/\Q$ is totally ramified at 2).%
\footnote{Take 
care that if $k = K \cap \Q(\mu_{2^\infty})$, $k_v$ may not be 
equal to $K_v \cap \Q_2(\mu_{2^\infty})$ (for instance 
$K = \Q(\sqrt{-17\,})$ for which $k=\Q$, $k_v = \Q_2$, $m=0$); 
but Theorem~\ref{thlocal} applies to 
$k_{(v)} = K_v \cap \Q_2(\mu_{2^\infty}) = 
\Q_2(\sqrt{-1\,})$ with $m=1$.} 

\smallskip 
The local norm group of 
$k_{(v)}$, restricted to the norms of units, is the following 
subgroup of $\Z_p^\times = \mu_{p-1} \oplus (1+ p\,\Z_p)$ 
for $p\ne 2$ or of 
\newline 
$\Z_2^\times = \langle\,-1\,\rangle \oplus (1 + 4\,\Z_2)$ for $p= 2$: 

\smallskip\smallskip 
$\ \ \bullet\ \ $ $p\ne 2$, $k_{(v)} = \Q_p(\mu_{p^{m+1}})$, $m\ge 0$; 
the norm group is $1 + p^{m+1}\,\Z_p$; 

\smallskip\smallskip 
$\ \ \bullet\ \ $ $p = 2$, $k_{(v)} = \Q_2^{(m)}$, $m\geq 0$; 
the norm group is 
$\langle\,-1\,\rangle \oplus (1 + 4\cdot 2^{m}\,\Z_2)$; 

\smallskip\smallskip 
$\ \ \bullet\ \ $ $p = 2$, $k_{(v)} = \Q'_2{}^{(m)}$, $m\geq 1$; 
the norm group is $\langle\,-1 + 4\cdot 2^{m-1} \,\rangle_{\Z_2}^{}$;


\smallskip\smallskip 
$\ \ \bullet\ \ $ $p = 2$, $k_{(v)} = \Q_2(\mu_{4\cdot 2^{m-1}})$, 
$m\geq 1$; the norm group is $1 + 4\cdot 2^{m-1}\,\Z_2$. 

\smallskip\smallskip 
We then have: 

\begin{theo}\label{thlocal} 
 Let $K_v$ be the completion 
of a number field $K$ at $v\,\vert\,p$; suppose that $K_v$ contains 
$\mu_p$ and a subfield $k_{(v)}$ of $\Q_p(\mu_{p^\infty})$ 
of degree $p^m$ over $\Q_p(\mu_p)$, $m \geq 0$. 
Let $K_v(\sqrt[p]{\alpha\,})$, $\alpha \in K_v^\times$, 
be an unramified Kummer extension of $K_v$ (modulo $K_v^{\times p}$
we can suppose that $\alpha$ is a local unit). 

Then we have 
$\,\No_{K_v/\Q_p}\, (\alpha) \equiv 1 \bmod (p^{m+2})$. 

Moreover, if $p=2$ and $k_{(v)} = \Q_2^{(m)}$, $m\geq 0$, 
and if at least one of the following two conditions holds: 

\smallskip 
\ \ (i) $\alpha \in K_v^{\times 2}$, 

\smallskip 
\ \ (ii) the index of ramification $e_v(K_v/k_{(v)})$ 
of $K_v/k_{(v)}$ is even, 

\smallskip\noindent 
we then have $\,\No_{K_v/\Q_2}\, (\alpha) \equiv 1 \bmod (4\cdot 2^{m+1})$. 
\end{theo}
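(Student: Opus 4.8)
The plan is to reduce the global norm computation to a purely local one via local class field theory, exploiting the unramifiedness of the Kummer extension $K_v(\sqrt[p]{\alpha\,})/K_v$. The key object is the norm group: since $K_v(\sqrt[p]{\alpha\,})/K_v$ is unramified of degree dividing $p$, its norm group contains all units, and Kummer theory pairs $\alpha$ against the Artin symbol of the Frobenius. Concretely, I would use the Hilbert symbol or the explicit reciprocity pairing $(\,\cdot\,,\,\cdot\,)_v$ on $K_v^\times/K_v^{\times p}$: the extension $K_v(\sqrt[p]{\alpha\,})/K_v$ is unramified if and only if $(\alpha, u)_v = 1$ for every unit $u$, equivalently $\alpha$ lies in the subgroup $K_v^{\times p}\cdot U_v^{(1)}\cdots$ cut out by the conductor–discriminant relation. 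The clean way to phrase this: $\alpha$ being a unit (as permitted by the reduction modulo $K_v^{\times p}$) with $K_v(\sqrt[p]{\alpha\,})/K_v$ unramified forces $\alpha$ into the group of $p$-th powers of units times the higher unit filtration step governed by $\mu_{p^{m+1}}\subset K_v$.

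\medskip
\textbf{Main steps.} First I would establish the transitivity of the norm: $\No_{K_v/\Q_p}=\No_{k_{(v)}/\Q_p}\circ\No_{K_v/k_{(v)}}$. Since $K_v\supseteq k_{(v)}$ and $k_{(v)}$ is the relevant cyclotomic layer, it suffices to land $\No_{K_v/k_{(v)}}(\alpha)$ inside the unit group of $k_{(v)}$ on which the norm group of $k_{(v)}/\Q_p$ is already described by the explicit list preceding the theorem. The second step is the local characterization: because $K_v$ contains $\mu_{p^{m+1}}$ and $k_{(v)}$ is the degree-$p^m$ subextension of $\Q_p(\mu_{p^\infty})$ over $\Q_p(\mu_p)$, the unramified-Kummer condition on $\alpha$ translates (via the nondegeneracy of the Hilbert pairing and the known structure of $U_{K_v}/U_{K_v}^p$) into the statement $\No_{K_v/k_{(v)}}(\alpha)\in N_{k_{(v)}}$, the norm group over $\Q_p$. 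Feeding this through the explicit norm-group descriptions then yields $\No_{K_v/\Q_p}(\alpha)\equiv 1 \bmod (p^{m+2})$ in the generic case and $\equiv 1 \bmod (4\cdot 2^{m+1})$ in the sharpened case $p=2$, $k_{(v)}=\Q_2^{(m)}$, where the norm group $\langle -1\rangle\oplus(1+4\cdot 2^m\,\Z_2)$ contributes the extra factor once the sign is controlled.

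\medskip
\textbf{The sharpened case.} For the final assertion under hypotheses (i) or (ii), the issue is precisely the $\langle -1\rangle$ factor appearing in the $\Q_2^{(m)}$ norm group: the generic bound only guarantees membership in $\langle -1\rangle\oplus(1+4\cdot 2^m\,\Z_2)$, so the residue modulo $4\cdot 2^{m+1}$ can a priori be $\pm 1$. Under (i), if $\alpha\in K_v^{\times 2}$ then $\No_{K_v/\Q_2}(\alpha)$ is itself a square in $\Q_2^\times$, and a square that is $\equiv\pm 1\bmod (4\cdot 2^m)$ and positive must be $\equiv 1\bmod(4\cdot 2^{m+1})$, eliminating the $-1$. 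Under (ii), evenness of $e_v(K_v/k_{(v)})$ makes the norm $\No_{K_v/k_{(v)}}$ factor through a quantity whose sign in $\langle -1\rangle$ is trivialized — essentially because a ramification index of $2$ squares away the $-1$ coset upon applying the transitivity of the norm to the totally ramified part.

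\medskip
\textbf{Main obstacle.} I expect the hardest part to be making the local-to-Kummer dictionary fully rigorous: pinning down exactly which filtration step of $U_{K_v}$ the unit $\alpha$ lands in, as a function of $m$ and of the ramification of $K_v$ over $k_{(v)}$, and verifying that the norm $\No_{K_v/k_{(v)}}$ carries that step into the correct step of $U_{k_{(v)}}$ so that the explicit norm-group membership kicks in with the claimed exponent. The sign bookkeeping in the $p=2$ case — tracking the $\langle -1\rangle$ component through each norm map — is the delicate accounting, and condition (ii) is the subtle hypothesis whose role is to guarantee that this sign component dies.
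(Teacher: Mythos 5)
Your plan founders on its central step. Membership of $\No_{K_v/\Q_p}(\alpha)$ in the norm group of $k_{(v)}/\Q_p$ is automatic for \emph{every} unit $\alpha$ of $K_v^\times$ (transitivity of the norm plus $k_{(v)}\subseteq K_v$), and it is strictly weaker than the theorem's conclusion: for odd $p$ the list preceding the theorem gives only $\No_{K_v/\Q_p}(\alpha)\in 1+p^{m+1}\Z_p$, i.e. a congruence modulo $p^{m+1}$, whereas the theorem asserts $p^{m+2}$; for $p=2$, $k_{(v)}=\Q_2^{(m)}$, it gives only $\equiv\pm 1\bmod (4\cdot 2^m)$, leaving the sign unresolved. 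So ``feeding this through the explicit norm-group descriptions'' cannot produce the stated congruence: the whole content of the theorem is the extra power of $p$ (resp. the sign), and that can only come from the unramifiedness of $K_v(\sqrt[p]{\alpha\,})/K_v$, which your main steps never exploit beyond norm-group membership. What is missing is exactly what you defer as the ``main obstacle''. The paper's proof (a) descends the radical to the inertia field $F_v:=K_v\cap\Q_p(\mu_p)^{\rm nr}$, writing $\alpha=\alpha'\,x^p$ with $\alpha'\in F_v^\times$; (b) invokes the explicit characterization of unramified Kummer extensions of $F_v$, namely $\alpha'=x'{}^p\,(1+p\,(1-\zeta)\,y')$ with $y'$ integral; and (c) computes $\No_{K_v/\Q_p}(1+p\,(1-\zeta)\,y')=\No_{F_v/\Q_p}\bigl((1+p\,(1-\zeta)\,y')^{[K_v:F_v]}\bigr)$, where $[K_v:F_v]$ is divisible by $p^m$ because $k_{(v)}\subseteq K_v$ is totally ramified of degree $p^m$ over $\Q_p(\mu_p)$; the binomial expansion raises the level to $1+p^{m+1}(1-\zeta)\,y''$, whose norm then lands in $1+p^{m+2}\Z_p$. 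The norm-group list is used only for the complementary factor $\No_{K_v/\Q_p}(x\,x')^p$, where the exterior $p$-th power supplies the missing factor of $p$ (and kills the sign when $p=2$). Even under the charitable reading of your second step via the projection formula for Hilbert symbols (which would show that $k_{(v)}(\sqrt[p]{\No_{K_v/k_{(v)}}(\alpha)\,})/k_{(v)}$ is again unramified), you would only have reduced to the case $K_v=k_{(v)}$, i.e. to Pisolkar's theorem, whose proof still requires steps (b)--(c).

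There are also two slips in the sharpened case. In (i), your inference ``a positive square $\equiv\pm 1\bmod(4\cdot 2^m)$ must be $\equiv 1\bmod(4\cdot 2^{m+1})$'' is false as stated: take $9$ with $m=1$. The correct argument is that $\No_{K_v/\Q_2}(\alpha)=\No_{K_v/\Q_2}(\beta)^2$ is the square of an element of the norm group $\langle\,-1\,\rangle\oplus(1+4\cdot 2^m\,\Z_2)$, and such squares lie in $1+4\cdot 2^{m+1}\,\Z_2$. In (ii), the mechanism is not sign-cancellation: evenness of $e_v(K_v/k_{(v)})$ makes $[K_v:F_v]=e(K_v/\Q_2)=e_v(K_v/k_{(v)})\cdot 2^m$ divisible by $2^{m+1}$, which boosts the congruence for the factor $\No_{F_v/\Q_2}\bigl((1+4\,y')^{[K_v:F_v]}\bigr)$ to $1\bmod (4\cdot 2^{m+1})$; the sign of the other factor is killed by its squaring regardless of (ii).
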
 

\begin{proof} We consider the following diagram: 

\unitlength=1.cm
$$\vbox{\hbox{\begin{picture}(12.1,3.2)
\put(8.35,3.0){\line(1,0){1.4}}
\put(4.5,3.0){\line(1,0){2.3}}
\put(7.8,1.5){\line(1,0){1.9}}

\put(8.5,0.0){\line(1,0){1.4}}

\put(4.5,1.5){\line(1,0){2.5}}
\put(2.,1.5){\line(1,0){1.5}}

\put(4.5,0.0){\line(1,0){2.3}}
\put(2.0,0.0){\line(1,0){1.55}}

\put(10.1,1.9){\line(0,1){0.75}}
\put(10.1,0.4){\line(0,1){0.75}}
\put(1.5,0.4){\line(0,1){0.75}}

\put(4.00,1.9){\line(0,1){0.75}}
\put(4.00,0.4){\line(0,1){0.75}}

\put(7.5,1.9){\line(0,1){0.75}}
\put(7.5,0.4){\line(0,1){0.75}}

\put(9.9,2.9){$K_v^{\rm nr}$}%
\put(6.9,2.9){$K_v(\sqrt[p]\alpha)$}%
\put(7.1,1.45){$\ \ \, .\ $}%
\put(3.7,2.9){$K_v$}%

\put(10.0,1.4){$k_{(v)}^{\rm nr}$}%
\put(1.3,1.4){$k_{(v)}$}%
\put(3.8,1.45){$\ .\ $}%

\put(3.8,-0.1){$F_v$}%
\put(6.94,-0.1){$F_v(\sqrt[p]{\alpha'\,})$}%
\put(0.8,-0.1){$\Q_p(\mu_p)$}%
\put(0.8,0.6){${}^{p^m}$}%
\put(10.0,-0.1){$\Q_p(\mu_p)^{\rm nr}$}%
\end{picture}   }} $$

\bigskip\noindent 
where for any field $L$, $L^{\rm nr}$ is the maximal 
unramified pro-extension of $L$; we know that we have 
for instance $L^{\rm nr} = L\,\Q_p^{\rm nr}$ (see e.g. [Gr, II.1.1.5]). 
Put $F_v := K_v \cap \Q_p(\mu_p)^{\rm nr}$. 
All horizontal extensions are unramified 
and all vertical extensions are totally ramified. 

\smallskip 
Consider the intersection 
$K_v(\sqrt[p]\alpha) \cap \Q_p(\mu_p)^{\rm nr}$ 
as a Kummer extension of $F_v$; 
thus there exists a suitable local unit $\alpha' \in F_v^\times$ 
such that $\alpha = \alpha'\, x^p$ with $x \in K_v^\times$. 

Then $\No_{K_v/\Q_p}\, (\alpha) = \No_{K_v/\Q_p}\, (\alpha')\cdot 
\No_{K_v/\Q_p}\,(x)^p$; since $F_v(\sqrt[p]{\alpha'\,})/F_v$ is 
\linebreak 
unramified we have (see e.g. [Gr, I.6.3, (ii)]): 
$$\alpha' = x'{}^p \,(1 + p\,(1-\zeta)\,y')\,,$$ 
$x',\,y' \in F_v^\times$, $v(y') \geq 0$, where 
$\zeta$ is a primitive $p$th root of unity 
(for $p=2$ we have $p\,(1-\zeta) = 4$). 
Then: 
$$\No_{K_v/\Q_p}\, (\alpha) = \No_{K_v/\Q_p}\, 
(1 + p\,(1-\zeta)\,y')\cdot\No_{K_v/\Q_p}\,(x\,x')^p\,;$$
but (see the above list of norm groups of $k_{(v)}$), we have 
$$\No_{K_v/\Q_p}\,(x\,x')^p \equiv 1 \bmod (p^{m+2})\,,$$ 
and in the particular case when $p=2$ and $k_{(v)} = \Q_2^{(m)}$, 
$$\No_{K_v/\Q_2}\,(x\,x')^2 \equiv 1 \bmod (4\cdot 2^{m+1})\,,$$ 
and 
$$\No_{K_v/\Q_p}\,(1 + p\,(1-\zeta)\,y') = \No_{F_v/\Q_p}\, 
(1 + p\,(1-\zeta)\,y')^{[K_v:F_v]}\equiv 1 \bmod (p^{m+2})$$ 
since $[K_v:F_v]$ is a multiple of $p^m$, 
which implies first that 
$$(1 + p\,(1-\zeta)\,y')^{[K_v:F_v]} = 1 + p^{m+1}\,(1-\zeta)\,y''\,,$$ 
and then that 
$$\No_{F_v/\Q_p}\,(1 + p^{m+1}\,(1-\zeta)\,y'') 
\in 1+p^{m+2}\,\Z_p\,;$$ 
hence we have the congruence 
$$\,\No_{K_v/\Q_p}\, (\alpha) \equiv 1 \bmod (p^{m+2})\,.$$ 

\smallskip 
\ \ (i) If $p=2$, $k_{(v)} = \Q_2^{(m)}$, $m \geq 0$, 
and $\alpha \in K_v^{\times 2}$, then we obtain
\linebreak 
$\No_{K_v/\Q_2}\, (\alpha) \equiv 1 \bmod (4\cdot 2^{m+1})$. 

\smallskip 
\ \ (ii) If $p=2$, $e_v(K_v/k_{(v)})$ is even, 
the above computation yields 
\linebreak 
$\No_{F_v/\Q_2}\, (1 + 4\,y')^{[K_v:F_v]}\equiv 1$ $\bmod \ 
(4\cdot 2^{m+1})$; if moreover $k_{(v)} = \Q_2^{(m)}$, 
since $\No_{K_v/\Q_2}\,(x\,x')^2 \equiv 1 \bmod (4\cdot 2^{m+1})$ 
in that case, we obtain 
$$\No_{K_v/\Q_2}\, (\alpha) \equiv 1 \bmod (4\cdot 2^{m+1})\,.$$ 
This completes the proof of the theorem. 
\end{proof} 

\section{Statement of the main result} 
We return to the case $p=2$. 
In Sections 3 and 4, we have computed 
$\ov\No_{K/\Q} \,({\mathfrak d}_{L/K})$, 
making use of $\ov\No_{K/\Q} \,({\mathfrak c})^2$ (absolute norm) 
and of $\No_{K/\Q} \, (\alpha)$ (arithmetic norm) from the 
$\No_{K_v/\Q_2}\, (\alpha)$,
taking into account that the congruence 
$\No_{K_v/\Q_2}\,(\alpha) \equiv 1 \bmod (4\cdot 2^{m})$ 
is independent of $v$ with the choice of $k_{(v)} := k_v = \Q_2\,k$ 
for all $v\,\vert\, 2$. 

To determine $\ov\No_{K/\Q} \,(\alpha)$ we note that 
$\ov\No_{K/\Q} \, (\alpha) = (-1)^\rho \,\No_{K/\Q} \,(\alpha)$, 
where $\rho$ is the number of conjugates of $\alpha$ 
which are negative in the real embeddings of $K$; 
from [Ma, \S\,3], the numbers $\rho$ and $c$ have same parity. 

Thus we have obtained in general: 

\begin{theo}\label{thmain} 
Let $L/K$ be a finite extension of number fields, 
unramified at~$2$. Denote by ${\mathfrak d}_{L/K}$ 
the discriminant of $L/K$, by $c$ the number of complex places 
of $L$ which lie above a real place of $K$, and by 
$\ov \No_{K/\Q}\,({\mathfrak d}_{L/K})$ 
the absolute norm of ${\mathfrak d}_{L/K}$. 
Let $k$ be the maximal subfield of $\Q(\mu_{2^\infty})$ contained 
in $K$ and put $[k:\Q] =: 2^m$, $m\geq 0$. 

\smallskip
Then we have the congruence 
$\ (-1)^c\,\ov \No_{K/\Q}\,({\mathfrak d}_{L/K}) \equiv 1 \bmod 
(4\cdot 2^{m})$. 
\end{theo}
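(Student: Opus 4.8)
The plan is to build the congruence from the two local--global computations of Sections 3 and 4, after first peeling off the squarefree ``quadratic part'' of the discriminant. Since $L/K$ is unramified at $2$, the ideal ${\mathfrak d}_{L/K}$ is odd, so part (iii) of the proposition of \S2 applies: choosing (modulo squares) an odd discriminant $\alpha\in K^\times$ of a $K$-base, with $K':=K(\sqrt{\alpha\,})$ unramified at $2$, I can write ${\mathfrak d}_{L/K}=(\alpha)\,{\mathfrak c}^2$ with ${\mathfrak c}$ odd. Taking absolute norms and using their multiplicativity gives $\ov\No_{K/\Q}({\mathfrak d}_{L/K})=\ov\No_{K/\Q}(\alpha)\cdot\ov\No_{K/\Q}({\mathfrak c})^2$, which separates the problem into the two factors already analysed.

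Next I would dispose of the sign. By definition $\ov\No_{K/\Q}(\alpha)=(-1)^\rho\,\No_{K/\Q}(\alpha)$, where $\rho$ counts the real embeddings of $K$ in which $\alpha$ is negative; since $\rho\equiv c \pmod 2$ by [Ma, \S\,3], the prefactor $(-1)^c$ cancels $(-1)^\rho$, leaving $(-1)^c\,\ov\No_{K/\Q}({\mathfrak d}_{L/K})=\No_{K/\Q}(\alpha)\cdot\ov\No_{K/\Q}({\mathfrak c})^2$. It thus suffices to prove that each of the two factors $\No_{K/\Q}(\alpha)$ and $\ov\No_{K/\Q}({\mathfrak c})^2$ is $\equiv 1 \bmod (4\cdot 2^m)$.

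For the ${\mathfrak c}$-factor I would simply quote the class field theory computation of Section 3: whichever of the three fields $\Q^{(m)}$, $\Q'{}^{(m)}$, $\Q(\mu_{4\cdot 2^{m-1}})$ the subfield $k$ turns out to be, one has $\ov\No_{K/\Q}({\mathfrak c})^2\equiv 1\bmod (4\cdot 2^m)$ (and even $\bmod (4\cdot 2^{m+1})$ in the case $k=\Q^{(m)}$), so this factor contributes trivially at the required modulus. For the $\alpha$-factor I would compute $\No_{K/\Q}(\alpha)$ $2$-adically via the decomposition $\No_{K/\Q}(\alpha)=\prod_{v\mid 2}\No_{K_v/\Q_2}(\alpha)$ and apply Theorem~\ref{thlocal} at each $v\mid 2$: as $K'/K$ is unramified at $2$, every $K_v(\sqrt{\alpha\,})/K_v$ is an unramified Kummer extension, so the general bound of the theorem gives $\No_{K_v/\Q_2}(\alpha)\equiv 1\bmod 2^{m+2}$, that is $\bmod (4\cdot 2^m)$. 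Multiplying the two factors then yields the asserted congruence.

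The step demanding real care is matching the local exponent to the global one uniformly over $v$. The invariant $m$ is defined globally by $[k:\Q]=2^m$, whereas Theorem~\ref{thlocal} needs, at each $v\mid 2$, a subfield $k_{(v)}\subset\Q_2(\mu_{2^\infty})$ of degree $2^m$ over $\Q_2(\mu_2)=\Q_2$. As the footnote warns, one must take $k_{(v)}:=\Q_2\,k$ and \emph{not} $K_v\cap\Q_2(\mu_{2^\infty})$; the former works because $k/\Q$ is totally ramified at $2$, so $[k_v:\Q_2]=[k:\Q]=2^m$ for every $v\mid 2$, and hence the per-place congruence holds with one and the same modulus $4\cdot 2^m$. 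This common modulus is precisely what keeps the product $\prod_{v\mid 2}\No_{K_v/\Q_2}(\alpha)$ congruent to $1$, and it is here that the global hypothesis on $K$ is genuinely used.
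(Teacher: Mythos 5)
Your proposal is correct and follows essentially the same route as the paper: the reduction ${\mathfrak d}_{L/K}=(\alpha)\,{\mathfrak c}^2$ from \S2, the Artin-group computation of $\ov\No_{K/\Q}({\mathfrak c})^2$ from \S3, the local decomposition $\No_{K/\Q}(\alpha)=\prod_{v\mid 2}\No_{K_v/\Q_2}(\alpha)$ with Theorem~\ref{thlocal} applied to $k_{(v)}=\Q_2\,k$ from \S4, and the sign argument $\rho\equiv c \pmod 2$ from \S5. One minor inaccuracy in a side remark: the footnote does not say that $K_v\cap\Q_2(\mu_{2^\infty})$ fails as a choice of $k_{(v)}$ (it contains $\Q_2\,k$, so Theorem~\ref{thlocal} applies to it and even gives a possibly stronger local congruence); the point is only that $\Q_2\,k$ has degree $2^m$ uniformly in $v$, which is what your product argument needs---this does not affect the validity of your proof.
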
 

\begin{remark} 
We have the following improvement in two particular circumstances: 
if $k = \Q^{(m)}$, $m\geq 0$, then under at least one 
of the following two conditions: 

\smallskip 
\ \ (i)  $2$ splits totally in $K(\sqrt {\alpha\,})/K$,\, 
\footnote{If $\alpha = x^2\,(1+ 4\,y)$, 
    $y \in K^\times$, $y$ $2$-integer, this condition is equivalent to 
    ${\rm Tr}_{\F_v/\F_2} (y) = 0$ for all $v\,\vert\,2$, 
    where ${\rm Tr}_{\F_v/\F_2}$ is 
    the absolute trace from the residue field $\F_v$ of $K$ 
    (see [Gr, I.6.3, Lemma]).} 

    \smallskip 
\ \  (ii) the indices of ramification of $v\,\vert\,2$ 
in $K/k$ are all even, 

\smallskip\noindent 
we obtain the congruence 
$\ (-1)^c\,.\,\ov \No_{K/\Q}\,({\mathfrak d}_{L/K}) \equiv 
1 \bmod (4\cdot 2^{m+1})$.\,
\footnote{Use the computation of $\ov\No_{K/\Q}\, ({\mathfrak c})^2$ at the end 
of Sections 3, then Theorem~\ref{thlocal} for the computation 
of $\,\No_{K_v/\Q_2}\, (\alpha)$ for $v\,\vert\,2$ 
in these particular cases.} 
\end{remark}


\begin{thebibliography}{99} 

\bibitem[Gr]{Gr} G. Gras, 
{\it Class Field Theory: from theory to practice}, 
SMM, Springer-Verlag, 2003;  second corrected printing: 2005. 
%
\bibitem[Ma]{Ma} J. Martinet, {\it Les discriminants quadratiques 
et la congruence de Stickelberger}, 
S\'em. Th\'eorie des Nombres, Bordeaux {\bf 1} (1989), 197--204. 
%
\bibitem[Pi]{Pi} S. Pisolkar, 
{\it Absolute norms of $p$-primary units}, 
Jour. de Th\'eorie des Nombres de Bordeaux {\bf 21} (2009), 733--740. 
\end{thebibliography}
\end{document}